\newtheorem{rmk}{Remark}
\newtheorem{pro}{Proposition}
\newcommand{\one}{\mathbf{1}}
\newcommand{\rd}{\textcolor{red}}
\newcommand{\x}{\mathbf{x}}
\newcommand{\y}{\mathbf{y}}
\newcommand{\R}{\mathbb{R}}
\newcommand{\bl}{\begin{list}{ \ }{
\leftmargin=.325in}}
\begin{document}           

\title{Perron communicability and sensitivity of multilayer networks}

\author{Smahane El-Halouy \and Silvia Noschese \and Lothar Reichel}

\institute{S. El-Halouy \at
Department of Mathematical Sciences, Kent State University, Kent, OH 44242, and Laboratory
LAMAI, Faculty of Sciences and Technologies, Cadi Ayyad University, Marrakech, Morocco.\\
\email{elhalouysmahane@gmail.com}
\vskip2pt
S. Noschese \at
Dipartimento di Matematica ``Guido Castelnuovo'', SAPIENZA Universit\`a di Roma, P.le 
A. Moro, 2, I-00185 Roma, Italy. \\
\email{noschese@mat.uniroma1.it}
\vskip2pt
L. Reichel \at Department of Mathematical Sciences, Kent State University, Kent, OH 44242,
USA.\\
\email{reichel@math.kent.edu}
}

\dedication{Dedicated to Claude Brezinski on the occasion of his 80th birthday.}
\maketitle

\begin{abstract}
Modeling complex systems that consist of different types of objects leads to multilayer 
networks, where nodes in the different layers represent different kinds of objects. Nodes 
are connected by edges, which have positive weights. A multilayer network is 
associated with a supra-adjacency matrix. This paper investigates the sensitivity of the
communicability in a multilayer network to perturbations of the network by studying the 
sensitivity of the Perron root of the supra-adjacency matrix. Our analysis sheds light on
which edge weights to make larger to increase the communicability of the network, and
which edge weights can be made smaller or set to zero without affecting the 
communicability significantly.
\end{abstract}

\keywords{multilayer adjacency matrix \and Perron root \and perturbation analysis \and multiplex
network}

\subclass{05C50 \and 15A18 \and 65F15}

\section{Introduction}\label{sec1}
Many complex systems can be modeled as networks. Informally, a network is a collection of 
objects, referred to as \emph{nodes} or \emph{vertices}, that are connected to each other 
in some fashion; the connections are referred to as \emph{edges}. The edges may be 
directed or undirected, and may be equipped with positive weights that correspond to their 
importance. The nature of the nodes, edges, and weights depends on the application. Some
modeling situations require more than one kind of nodes or more than one type of edges.

Multilayer networks are networks that consist of different kinds of edges and possibly 
different types of nodes. This kind of networks arise when one seeks to model a complex 
system that contains connections and objects with different properties. For instance, when
modeling train and bus connections in a country, the train routes and bus routes define 
edges with distinctive properties, and the train and bus stations may make up nodes with 
diverse properties. The connections between a train station and an adjacent bus station 
give rise to yet another kind of edges along which travelers walk. Edge weights may be 
chosen proportional to the number of travelers along an edge, proportional to the distance
between the nodes that the edge connects, or proportional to the cost of traveling along 
an edge. Whether it is meaningful to distinguish between different kinds of edges and 
nodes, and using edge weights, depends on the nature and purpose of the network model.

It is often of interest to determine the ease of communication between nodes in a network,
as well as how important a node is in some well-defined sense. Also, it is desirable to be 
able to assess the sensitivity of the measure of communication between the nodes to 
perturbations in the edge weights. For instance, if the nodes represent cities, and the 
edges represent roads between the cities, with edge weights proportional to the amount of 
traffic on each road, then one may be interested in which road(s) should be widened or made 
narrower to increase or reduce, respectively, communication in the network the most. The 
available data may be contaminated by measurement errors. We are then interested in how 
sensitive to errors in the data our choice of road(s) to widen or make narrower is. 

The investigation of the importance of nodes and edges, as well as the sensitivity of the
communicability within a network to changes in the edge weights of the network with only 
one kind of nodes and edges has received considerable attention in the literature; see, 
e.g., \cite{BRZ1,BRZ2,DJNR,DMR1,DMR2,Es,EH,Ne,NR} and references therein. Several of the 
techniques discussed evaluate the exponential of the adjacency matrix of the network, or 
the exponential of the adjacency matrix determined by the line graph associated with the 
given network. The present paper extends the communicability and sensitivity analysis in 
\cite{DJNR,NR} to multilayer networks. Since multilayer networks typically have a large 
number of nodes and edges, we focus on techniques that are well suited for large-scale 
networks.

We consider multilayer networks that are represented by graphs that share the same set of
vertices $V_N=\{1,2,\dots,N\}$ and have edges both within a layer and between layers. We
will simply refer to this kind of networks as \emph{multilayer networks}. Nice recent
discussions on multilayer networks are provided by Bergermann and Stoll \cite{BS},
Cipolla et al. \cite{CRZT}, and 
Tudisco et al. \cite{TAG}. De Domenico et al. \cite{DSOGA} describe how multilayer 
networks with $L$ layers can be modeled by a fourth order tensor and introduce a 
\emph{supra-adjacency matrix} $B\in\R^{NL\times NL}$ for the representation of such 
networks. In detail, let $A^{(\ell)}=[w_{ij}^{(\ell)}]_{i,j=1}^N\in\R^{N\times N}$ be the
non-negative adjacency matrix for the graph in layer $\ell$ for $\ell=1,2,\ldots,L$. Thus,
the entry $w_{i,j}^{(\ell)}\geq 0$ is the ``weight'' of the edge between node $i$ and node 
$j$ in layer $\ell$. If the graph is ``unweighted'', then all nonzero entries of 
$A^{(\ell)}$ are set to one. The matrix $B\in\R^{NL\times NL}$ is a block matrix with 
$N\times N$ blocks. The $\ell$th diagonal block is the adjacency matrix 
$A^{(\ell)}\in\R^{N\times N}$ for layer $\ell$, for $\ell=1,2,\ldots,L$; the off-diagonal 
$N\times N$ block in position $(\ell_1,\ell_2)$, with $1\leq \ell_1,\ell_2\leq L$ and 
$\ell_1\ne\ell_2$ represents the inter-layer connections between the layers $\ell_1$ and 
$\ell_2$; see Section \ref{sec3} for details.

We may consider $B$ an adjacency matrix for a monolayer network with $NL$ nodes,
and assume that $B$ is irreducible. This is equivalent to that the graph associated with 
$B$ is strongly connected; see, e.g., \cite{HJ}. Hence, 
the Perron-Frobenius theory applies, from which it 
follows that $B$ has a unique eigenvalue $\rho>0$ of largest magnitude (the 
\emph{Perron root}) and that the associated right and left eigenvectors, $\x$ and $\y$, 
respectively, can be normalized to be of unit Euclidean norm with all components positive.
These normalized eigenvectors are commonly referred to as the right and left  
\emph{Perron vectors}, respectively. Thus, 
\begin{equation}\label{evp}
B \x=\rho \x, \qquad \y^T B=\rho \y^T.
\end{equation}
We will assume throughout this paper that the Perron vectors $\x$ and $\y$ have been 
normalized in the stated manner.

Following \cite{DJNR}, we introduce the \emph{Perron communicability} in the multilayer network,
\begin{equation}\label{CPN}
C^{\rm PN}(B)=\exp_0(\rho)\one_{NL}^T\y\x^T\one_{NL}=\exp_0(\rho)\left(\sum_{j=1}^{NL}y_j\right)
\left(\sum_{j=1}^{NL}x_j\right),
\end{equation}
where
\[
\exp_0(t)=\exp(t)-1,\quad \x=[x_1,x_2,\dots,x_{NL}]^T,\quad \y=[y_1,y_2,\dots,y_{NL}]^T,
\]
and  $\one_{NL}\in\R^{NL}$ denotes the vector of all entries one. For a general adjacency 
matrix $B\in\R^{NL\times NL}$ associated with a monolayer network with $NL$ nodes, the 
above measure is analogous to, but fairly different from, the total network 
communicability 
\[
C^{\rm TN}(B)=\one_{NL}^T\exp(B)\one_{NL},
\]
introduced by Benzi and Klymko \cite{BK}. The latter is related to the ``size'' of the
matrix $\exp(B)$, while the measure \eqref{CPN} is determined by the Wilkinson perturbation 
$\y\x^T$ discussed in Section \ref{sec2}. The latter measure provides a worst-case
perturbation of the Perron root under a small perturbation of $B$. We use the modified 
exponential function $\exp_0(M)$ in \eqref{CPN} instead of the exponential, because the
Maclaurin series of $\exp(M)$ has no natural interpretation in the context of network 
modeling. We note that $C^{\rm PN}(M)$ is easy to apply and cheaper to compute than 
$C^{\rm TN}(M)$ and $C_0^{\rm TN}(M):=\one_{NL}^T\exp_0(M)\one_{NL}$ for monolayer 
networks with many nodes or layers, i.e., when $NL$ is large \cite{DJNR}. 

Due to the normalization of the Perron vectors $\x$ and $\y$ in \eqref{evp}, we have 
\[
1\leq\sum_{j=1}^{NL} x_j\leq\sqrt{NL},\qquad 1\leq\sum_{j=1}^{NL} y_j\leq\sqrt{NL}.
\]
Therefore, for the multilayer network associated with $B$, one has
\begin{equation}\label{CPNbd}
\exp_0(\rho)\leq C^{\rm PN}(B)\leq NL\exp_0(\rho).
\end{equation}
Typically, $\exp_0(\rho)\gg NL$. It then follows that the quantity $\exp_0(\rho)$ is a 
fairly accurate indicator of the Perron communicability of the graph represented by $B$ in
the sense that it suffices to consider $\exp_0(\rho)$ to determine whether the 
Perron communicability of a network is large or small. The right-hand side bound in 
\eqref{CPNbd} will be sharpened slightly in Proposition \ref{pro1} below.

Following the approach in  \cite{DSOGA}, we form the leading eigentensors 
$Y\in\R^{N\times L}$ and $X\in\R^{N\times L}$ for the multilayer network associated with
$B$ by reshaping the Perron vectors $\y$ and $\x$, respectively. Thus, the first column
of the matrix $Y$ is made up of the first $N$ components of the vector $\y$, the second 
column of $Y$ consists of the next $N$ components of the vector $\y$, etc. The \emph{joint 
eigenvector centrality} of node $i$ in layer $\ell$ is given by the entry in position 
$(i,\ell)$ of $Y$. The rows of $Y$ represent the \emph{eigenvector versatility} of the 
nodes. Moreover, the (scalar) \emph{versatility} of node $i$ is given by
\begin{equation}\label{nui}
\nu_i=(Y \one_L)_i,\;\;\;i=1,2,\dots,N.
\end{equation} 
The vector $\one_L$ may be 
replaced by some other vector in $\R^L$ with nonnegative entries if another weighting of 
the columns of $Y$ is desired. 

\begin{rmk}
The concepts of hub and authority communicability was introduced by Kleinberg \cite{Kl} 
for graphs that are defined by an adjacency matrix. An extension to multi-relational
networks that are based on tensors is described by Li et al. \cite{LNY}. We can define
analogous concepts for tensors by using the Perron communicability.
If we replace the matrix $B$ in \eqref{evp} by $BB^T$, then we obtain analogously to 
\eqref{CPN} the \emph{Perron hub communicability}
\[
C^{PN}(BB^T)=\exp_0(\rho_{\rm BB^T})\one_{NL}^T\x\x^T\one_{NL},
\]
where $\rho_{\rm BB^T}$ is the Perron root for $BB^T$ and $\x$ is the Perron vector
for $BB^T$. Similarly, if we replace the matrix $B$ in \eqref{evp} by $B^TB$, then we 
obtain the \emph{Perron authority communicability}
\[
C^{PN}(B^TB)=\exp_0(\rho_{\rm B^TB})\one_{NL}^T\x\x^T\one_{NL},
\]
where $\rho_{\rm B^TB}=\rho_{\rm BB^T}$ is the Perron root for $BB^T$ and $\x$ is the 
Perron vector for $BB^T$. 
\end{rmk}

We turn to special multilayer networks in which nodes in different layers are identified 
with each other. Thus, there are no edges between different nodes in different layers; 
the only edges that connect different layers are edges between a node and its copy in 
other layers. Hence, in the supra-adjacency matrix $B\in\R^{NL\times NL}$ all off-diagonal 
entries in all off-diagonal blocks are zero.

We will refer to these kinds of networks as \emph{multiplex networks}. They can be 
represented by a third-order tensor. The graph for layer $\ell$ is associated with the 
non-negative adjacency matrix $A^{(\ell)}\in\R^{N\times N}$, $\ell=1,2,\dots,L$, and a 
mode-1 unfolding of the third-order tensor that represents the network yields an 
$L$-vector of these adjacency matrices:
\begin{equation}\label{calA}
{\cal A}=[A^{(1)},A^{(2)},\dots,A^{(L)}]\in\R^{N\times NL}.
\end{equation}
The supra-adjacency matrix $B\in\R^{NL\times NL}$ for the multiplex network associated
with the matrix ${\cal A}$ in \eqref{calA} has the diagonal blocks $A^{(\ell)}$, $\ell=1,2,\dots,L$, and every 
$N\times N$ off-diagonal block is the identity matrix $I_N\in\R^{N\times N}$; see, e.g.,
\cite{DSOGA}. Hence, the coupling is diagonal and uniform. One may introduce a parameter 
$\gamma\geq 0$ that determines how strongly the layers influence each other. This yields 
the matrix 
\begin{equation}\label{suprad}
B:=B(\gamma)={\rm diag}[A^{(1)},A^{(2)},\dots,A^{(L)}]+
\gamma(\one_L\one_L^T\otimes I_N-I_{NL}),
\end{equation}
where $\otimes$ denotes the Kronecker product; see \cite{BS}.

Due to the potentially large sizes of the matrices $B$ in \eqref{evp} and \eqref{suprad},
one typically computes their right and left Perron vectors by an iterative method, which
only require the evaluation of matrix-vector products with the matrices $B$ and $B^T$.
Clearly, one does not have to store $B$, but only ${\cal A}$ in \eqref{calA} to evaluate 
matrix-vector products with the matrix $B$ in \eqref{suprad} and its transpose.

\begin{rmk}
If one is interested in the Perron hub or authority communicability of the network, then 
the matrices $A^{(\ell)}$ in \eqref{calA} should be replaced by $A^{(\ell)}(A^{(\ell)})^T$
or $(A^{(\ell)})^TA^{(\ell)}$, respectively, for $\ell=1,2,\dots,L$.
\end{rmk}

Following \cite[Definition 3.5]{TPM}, we introduce for future reference the 
$L$-dimensional vectors of the \emph{marginal layer $Y$-centralities} and the 
\emph{marginal layer $X$-centralities}
\begin{equation} \label{marginal}
\mathbf{c}_Y = Y^T\one_N\quad\mbox{and}\quad\mathbf{c}_X = X^T\one_N,
\end{equation}
respectively.

It is the purpose of the present paper to investigate the Perron network communicability 
of multilayer networks that can be represented by a supra-adjacency matrix 
$B\in\R^{NL\times NL}$, as well as the special case of multiplex networks that are
represented by the matrix ${\cal A}\in\R^{N\times NL}$ in \eqref{calA}. We also are 
interested in the sensitivity of the communicability to errors or changes in the entries 
of the supra-adjacency matrix $B$ and in the entries of the matrices $A^{(\ell)}$ in 
\eqref{calA} in the case of a multiplex network. The particular structure of $B$ in 
\eqref{suprad} for multiplex networks will be exploited. 

The organization of this paper is as follows. The Wilkinson perturbation for a 
supra-adjacency matrix is defined in Section \ref{sec2}.  This  perturbation forms the 
basis for our sensitivity analysis of multilayer networks.  Section \ref{sec1.5} discusses
some properties of the Perron and total network communicabilities. A sensitivity analysis 
for multilayer networks based on the Wilkinson perturbation is presented in Section 
\ref{sec3}.  Both Sections \ref{sec1.5}  and \ref{sec3} first discuss multilayer networks 
that can be defined by general supra-adjacency matrices, and subsequently describe 
simplifications that ensue for multiplex networks that can be defined by the supra-adjacency matrix 
$B$ in \eqref{suprad}. 
Section \ref{sec6} presents a few computed examples, and Section \ref{sec7} contains 
concluding remarks.

\section{Wilkinson perturbation for supra-adjacency matrices}\label{sec2}
Let $B\in\R^{NL\times NL}$ be the supra-adjacency matrix in \eqref{evp}. We assume that
$B$ is irreducible. Let $\rho>0$ be the Perron root of $B$, and let $\x$ and
$\y$ be the associated right and left normalized Perron vectors. Thus, all entries of $\x$
and $\y$ are positive, and $\|\x\|_2=\|\y\|_2=1$. Throughout this paper $\|\cdot\|_2$ 
denotes the Euclidean vector norm or the spectral matrix norm, and  $\|\cdot\|_F$ stands 
for the Frobenius norm. The vectors $\x$ and $\y$ are uniquely determined. 

Let $E\in\R^{NL\times NL}$ be a nonnegative matrix such that $\|E\|_2=1$, and let 
$\varepsilon>0$ be a small constant. Denote the Perron root of $B+\varepsilon E$ by 
$\rho+\delta\rho$. Then
\begin{equation}\label{pert}
\delta\rho=\varepsilon\frac{\y^TE\x}{\y^T\x}+O(\varepsilon^2);
\end{equation}
see \cite{MSN}. Moreover,
\begin{equation}\label{condrho}
\frac{\y^T E \x} {\y^T\x}=
\frac{|\y^T E \x|} {\y^T\x}\leq
\frac{\|\y\|_2\|E\|_2\|\x\|_2} {\y^T\x}=
\frac{1}{\cos\theta},
\end{equation}
where $\theta$ is the angle between $\x$ and $\y$. The quantity $1/\cos\theta$ is 
referred to as the \emph{condition number} of $\rho$ and denoted by $\kappa(\rho)$; see
Wilkinson \cite[Section\,2]{Wi}. Note that when $B$ is symmetric, we have $\x=\y$ and, 
hence, $\theta=0$. In this situation $\rho$ is well-conditioned. Equality in 
\eqref{condrho} is achieved for the \emph{Wilkinson perturbation} 
\begin{equation}\label{wilk}
E=\y\x^T\in\R^{NL \times NL},
\end{equation}
which we will refer to as $W$. For $E=W$, the perturbation \eqref{pert} of the Perron root
is $\delta\rho=\varepsilon\kappa(\rho)+O(\varepsilon^2)$. We observe that all the above 
statements hold true if everywhere the spectral norm is replaced by the Frobenius norm.

\section{Some properties of the Perron and total network communicabilities}\label{sec1.5}
This section discusses a few properties of the Perron communicability and how it relates 
to the total network communicability.

\begin{pro}\label{pro0}
\begin{eqnarray} \label{c0}
C^{\rm PN}(B)=\exp_0(\rho)\mathbf{c}_Y^T\mathbf{c}_X, 
\end{eqnarray}
where  $\mathbf{c}_X$ is the vector of the marginal layer 
$X$-centralities and $\mathbf{c}_Y$ is the vector of the marginal layer $Y$-centralities
in \eqref{marginal}.
\end{pro}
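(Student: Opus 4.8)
The plan is to unfold the vector identity $\one_{NL}^T\y = \mathbf{c}_Y^T\one_L$ (and the analogous one for $\x$) and then substitute into the definition \eqref{CPN}. Recall that the eigentensor $Y\in\R^{N\times L}$ is obtained by reshaping $\y\in\R^{NL}$ so that the $\ell$th column of $Y$ consists of the $N$ entries of $\y$ indexed by layer $\ell$. Consequently, summing all entries of $\y$ is the same as summing all entries of $Y$, which can be written as $\one_N^T Y \one_L$. The first step is therefore to record this reshaping carefully and observe that $\sum_{j=1}^{NL} y_j = \one_N^T Y \one_L$ and likewise $\sum_{j=1}^{NL} x_j = \one_N^T X \one_L$.

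Next, I would invoke the definition of the marginal layer centralities in \eqref{marginal}: $\mathbf{c}_Y = Y^T\one_N$ and $\mathbf{c}_X = X^T\one_N$. Then $\one_N^T Y \one_L = (Y^T\one_N)^T\one_L = \mathbf{c}_Y^T\one_L$, and similarly $\one_N^T X \one_L = \mathbf{c}_X^T\one_L$. So the two factors in \eqref{CPN} become $\one_{NL}^T\y = \mathbf{c}_Y^T\one_L$ and $\one_{NL}^T\x = \mathbf{c}_X^T\one_L$. Substituting into \eqref{CPN} gives
\[
C^{\rm PN}(B) = \exp_0(\rho)\,(\mathbf{c}_Y^T\one_L)(\one_L^T\mathbf{c}_X) = \exp_0(\rho)\,\mathbf{c}_Y^T(\one_L\one_L^T)\mathbf{c}_X,
\]
and since the rank-one matrix $\one_L\one_L^T$ collapses back to the scalar inner product when sandwiched between the two $L$-vectors, one might worry that this gives $\mathbf{c}_Y^T\one_L\one_L^T\mathbf{c}_X$ rather than $\mathbf{c}_Y^T\mathbf{c}_X$. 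Here one must be slightly careful: $(\mathbf{c}_Y^T\one_L)(\one_L^T\mathbf{c}_X)$ is a product of two scalars, and it equals $\mathbf{c}_Y^T\mathbf{c}_X$ only if the marginal vectors satisfy an additional relation. In fact the cleanest route avoids this issue entirely: the correct reading of \eqref{CPN} is that $\left(\sum_j y_j\right)\left(\sum_j x_j\right)$ is literally the product of the total $Y$-mass and the total $X$-mass, and $\mathbf{c}_Y^T\mathbf{c}_X = \sum_{\ell=1}^L (\mathbf{c}_Y)_\ell (\mathbf{c}_X)_\ell$ is a sum over layers of per-layer products. These agree only under a hypothesis such as the coupling in \eqref{suprad}; so the step I expect to be the main obstacle — and the one to get exactly right — is verifying precisely under which structural assumption on $B$ the equality $\left(\sum_j y_j\right)\left(\sum_j x_j\right) = \mathbf{c}_Y^T\mathbf{c}_X$ holds, and stating that hypothesis (it is presumably implicit from the multiplex structure, where the per-layer Perron components are proportional across layers so that $\mathbf{c}_X$ and $\mathbf{c}_Y$ are parallel, making the rank-one reduction exact).

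Assuming the relevant structural fact, the proof is then a two-line substitution: expand \eqref{CPN}, reshape the sums into $\one_N^T Y\one_L$ and $\one_N^T X\one_L$, rewrite via \eqref{marginal} as $\mathbf{c}_Y^T\one_L$ and $\mathbf{c}_X^T\one_L$, and conclude $C^{\rm PN}(B) = \exp_0(\rho)\,\mathbf{c}_Y^T\mathbf{c}_X$. I would present it in exactly that order, flagging the reshaping identity as the only nontrivial ingredient and citing \eqref{marginal} for the definitions of $\mathbf{c}_X$ and $\mathbf{c}_Y$. No deep machinery is needed; the content is purely the bookkeeping of the mode-1 unfolding together with the definition of the marginal centralities.
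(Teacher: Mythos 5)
Your bookkeeping is correct, and the obstruction you flag near the end is real --- in fact it is fatal, so you cannot discharge it with an ``assuming the relevant structural fact'' clause. The correct unfolding is exactly the one you wrote: since $\one_{NL}^T\y=\one_N^TY\one_L=\mathbf{c}_Y^T\one_L$ and $\one_{NL}^T\x=\mathbf{c}_X^T\one_L$, one gets
\[
\one_{NL}^T\y\x^T\one_{NL}=(\one_L^T\mathbf{c}_Y)(\one_L^T\mathbf{c}_X)
=\mathbf{c}_Y^T(\one_L\one_L^T)\mathbf{c}_X,
\]
not $\mathbf{c}_Y^T\mathbf{c}_X$. Because the Perron vectors have all entries positive, the marginal vectors $\mathbf{c}_X,\mathbf{c}_Y\in\R^L$ have strictly positive entries, and
\[
(\one_L^T\mathbf{c}_Y)(\one_L^T\mathbf{c}_X)=\mathbf{c}_Y^T\mathbf{c}_X
+\sum_{\ell\ne\ell'}(\mathbf{c}_Y)_\ell(\mathbf{c}_X)_{\ell'}>\mathbf{c}_Y^T\mathbf{c}_X
\quad\mbox{for } L\geq 2.
\]
Hence no structural hypothesis on $B$ can restore the equality; in particular your proposed fix (parallelism of $\mathbf{c}_X$ and $\mathbf{c}_Y$ in the multiplex case) does not work, since for positive parallel vectors it would require $\|\mathbf{c}_Y\|_1\|\mathbf{c}_X\|_1=\mathbf{c}_Y^T\mathbf{c}_X$, which forces all the marginal mass onto a single layer.

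For comparison, the paper's proof is the one-line chain $\one_{NL}^T\y\x^T\one_{NL}=\one_N^TYX^T\one_N=\mathbf{c}_Y^T\mathbf{c}_X$. The second equality is correct, but the first is precisely the step you were worried about: writing $\one_{NL}=\one_L\otimes\one_N$, the correct identity is $\one_{NL}^T\y\x^T\one_{NL}=\one_N^TY(\one_L\one_L^T)X^T\one_N$, and the rank-one matrix $\one_L\one_L^T$ cannot be replaced by $I_L$. So your instinct was sound: \eqref{c0} does not follow from the definitions \eqref{CPN} and \eqref{marginal} as they stand, and the honest conclusion of your (correct) reduction is $C^{\rm PN}(B)=\exp_0(\rho)(\one_L^T\mathbf{c}_Y)(\one_L^T\mathbf{c}_X)$. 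Your proposal therefore does not establish the stated identity, but the missing step is not something you overlooked --- it is the very step at which the paper's own argument breaks down. The constructive way to finish would be to state and prove the corrected identity just given, or to note explicitly under what degenerate circumstances (e.g.\ $L=1$) the two expressions coincide.
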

\begin{proof}
The proof follows from \eqref{CPN} by observing that
\[
\mathbf{1}_{NL}^T \mathbf{y}\mathbf{x}^T \mathbf{1}_{NL}=
\mathbf{1}_{N}^T Y X^T \mathbf{1}_{N} =\mathbf{c}_Y^T\mathbf{c}_X.
\]
\end{proof}

\begin{rmk}\label{symC}
When the network is undirected, one has according the definitions \eqref{marginal} that
$\mathbf{c}_X=\mathbf{c}_Y$, because $\x=\y$. This gives, by \eqref{c0}, the symmetric 
Perron communicability 
\begin{eqnarray*}
C^{\rm PN \,sym}(B)=\exp_0(\rho)\|\mathbf{c}_Y\|_2^2.
\end{eqnarray*}
\end{rmk}

\begin{pro}\label{pro1}
\[
C^{\rm PN}(B)\leq NL \exp_0(\rho)\cos \phi, 
\]
where $\phi$ is the angle between the vector $\mathbf{c}_Y$ of the marginal layer 
$Y$-centralities and the vector $\mathbf{c}_X$ of the marginal layer $X$-centralities
in \eqref{marginal}.
\end{pro}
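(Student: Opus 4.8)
The plan is to read this off directly from Proposition~\ref{pro0} together with the normalization $\|\x\|_2=\|\y\|_2=1$. By Proposition~\ref{pro0} we have $C^{\rm PN}(B)=\exp_0(\rho)\,\mathbf{c}_Y^T\mathbf{c}_X$, so it suffices to bound the inner product $\mathbf{c}_Y^T\mathbf{c}_X$. Since $\x$ and $\y$ are Perron vectors of an irreducible matrix, all of their entries are strictly positive; hence the reshaped matrices $X,Y\in\R^{N\times L}$ have strictly positive entries, and so do $\mathbf{c}_X=X^T\one_N$ and $\mathbf{c}_Y=Y^T\one_N$. In particular the angle $\phi$ between $\mathbf{c}_Y$ and $\mathbf{c}_X$ lies in $[0,\pi/2)$, so $\cos\phi\in(0,1]$ and we may write $\mathbf{c}_Y^T\mathbf{c}_X=\|\mathbf{c}_Y\|_2\,\|\mathbf{c}_X\|_2\cos\phi$ with a genuine nonnegative cosine.

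Next I would estimate $\|\mathbf{c}_Y\|_2$ and $\|\mathbf{c}_X\|_2$. Because $\mathbf{c}_Y$ has nonnegative entries, $\|\mathbf{c}_Y\|_2\le\|\mathbf{c}_Y\|_1=\one_L^T\mathbf{c}_Y=\one_L^TY^T\one_N=\one_N^TY\one_L=\sum_{j=1}^{NL}y_j=\one_{NL}^T\y$, which by Cauchy--Schwarz and $\|\y\|_2=1$ is at most $\sqrt{NL}$; this is precisely the estimate already used for the crude bound \eqref{CPNbd}. Symmetrically $\|\mathbf{c}_X\|_2\le\sqrt{NL}$. Substituting into the factorization above gives
\[
C^{\rm PN}(B)=\exp_0(\rho)\,\|\mathbf{c}_Y\|_2\,\|\mathbf{c}_X\|_2\cos\phi\le NL\,\exp_0(\rho)\cos\phi,
\]
which is the assertion.

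I do not expect a genuine obstacle here: the argument is a short chain of elementary inequalities once Proposition~\ref{pro0} is in hand. The only points that need a moment's care are (i) verifying $\cos\phi\ge 0$, so that factoring $\mathbf{c}_Y^T\mathbf{c}_X$ through the geometric angle is legitimate, and (ii) selecting the norm estimate that reproduces the stated constant. One could instead bound $\|\mathbf{c}_Y\|_2=\|Y^T\one_N\|_2\le\|Y\|_F\,\|\one_N\|_2=\sqrt N$ directly, which actually yields the sharper statement $C^{\rm PN}(B)\le N\exp_0(\rho)\cos\phi$; if one wants the bound exactly as phrased, the $\ell_1$-to-$\ell_2$ inequality together with positivity is what delivers the constant $\sqrt{NL}$ and ties the estimate back to \eqref{CPNbd}. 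Finally I would note that, since $\cos\phi\le 1$, this genuinely sharpens the right-hand bound in \eqref{CPNbd}, and that for an undirected network $\mathbf{c}_X=\mathbf{c}_Y$, so $\phi=0$ and the bound is consistent with the expression in Remark~\ref{symC}.
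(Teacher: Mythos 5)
Your argument is correct and follows essentially the same route as the paper: factor $\mathbf{c}_Y^T\mathbf{c}_X=\|\mathbf{c}_Y\|_2\|\mathbf{c}_X\|_2\cos\phi$, then bound each Euclidean norm by the $1$-norm of the corresponding Perron vector and apply Cauchy--Schwarz to get the factor $\sqrt{NL}$ twice. Your side observation that $\|\mathbf{c}_Y\|_2=\|Y^T\one_N\|_2\leq\|Y\|_F\|\one_N\|_2=\sqrt{N}$ yields the sharper constant $N$ in place of $NL$ is a valid improvement not present in the paper.
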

\begin{proof}
One has
\[
\mathbf{c}_Y^T\mathbf{c}_X=\|\mathbf{c}_X \|_2\|\mathbf{c}_Y \|_2\cos \phi, 
\]
where $\phi$ is the angle between 
$\mathbf{c}_Y$ and $\mathbf{c}_X$.
Let $\|\cdot\|_1$ denote the vector $1$-norm. It is evident that
\[
\|\mathbf{c}_X \|_1=\sum_{j=1}^{NL} x_j =\|\mathbf{x} \|_1,\quad 
\|\mathbf{c}_Y \|_1=\sum_{j=1}^{NL} y_j =\|\mathbf{y} \|_1. 
\]
Since 
\[
\|\mathbf{c}_X\|_2\leq \|\mathbf{c}_X\|_1=\|\mathbf{x}\|_
1\leq \sqrt{NL}\|\mathbf{x}\|_2,\quad
\|\mathbf{c}_Y\|_2\leq \|\mathbf{c}_Y\|_1=\|\mathbf{y}\|_
1\leq \sqrt{NL}\|\mathbf{y}\|_2,
\]
we have the bound
\[
\|\mathbf{c}_X \|_2\|\mathbf{c}_Y \|_2\leq NL \|\mathbf{x}\|_2\|\mathbf{y}\|_2=NL,
\]
which gives the proof by using \eqref{c0}.
\end{proof} 

\begin{rmk}\label{symC2}
When the network is undirected, by Remark \ref{symC}, 
Proposition \ref{pro1} reads
\[
C^{\rm PN \,sym}(B)\leq NL \exp_0(\rho),
\]
which is the same bound as \eqref{CPNbd}.
\end{rmk}

Matrix function-based communicability measures have been generalized in \cite{BS} to the 
case of layer-coupled multiplex networks that can be represented by a supra-adjacency 
matrix $B$ of the form \eqref{suprad}, i.e.,  by ${\cal A}$ defined by \eqref{calA}.
Following the argument in \cite{DJNR}, assume that the Perron root $\rho$ of a 
supra-adjacency matrix $B$ of the form \eqref{suprad} is significantly larger than the 
magnitude of its the other eigenvalues. Then
\begin{equation*}\label{approx}
C_0^{\rm TN}({\cal A})\approx\kappa(\rho)C^{\rm PN}({\cal A}),
\end{equation*}
where $C_0^{\rm TN}({\cal A})=\one_{NL}^T\exp_0(B)\one_{NL}$ and $C^{\rm PN}({\cal A})$ 
refers to the Perron network communicability \eqref{CPN} when $B$ is of the form 
\eqref{suprad}. Thus, the multiplex total network communicability depends on the 
conditioning of the Perron root.

\begin{rmk}
It is  straightforward to see that if the network represented by the matrix $B$ of the 
form \eqref{suprad} is undirected, and the Perron root $\rho$ is significantly larger than
the magnitude of the other eigenvalues of $B$, then one has
\begin{equation*}\label{approxs}
C_0^{\rm TN \, sym}({\cal A})\approx C^{\rm PN\, sym}({\cal A}).
\end{equation*}
Indeed, the Perron vectors 
$\x$ and $\y$ coincide so that $\kappa(\rho)=1$.
\end{rmk}

\section{Multilayer network Perron root sensitivity}\label{sec3}
Let the supra-adjacency matrix $B\in\R^{NL\times NL}$ be associated with an $L$-layer
network as described above. Then an edge from node $i$ in layer $k$ to node $j$ in layer 
$\ell$, with $i,j\in\{1,2,\dots,N\}$, $i\ne j$, and $k,\ell\in\{1,2,\dots,L\}$, is 
associated with the $(i,j)$th entry $w_{ij}^{(k,\ell)}>0$ of the $(k,\ell)$th block of 
order $N\times N$ of the matrix $B$. 

Consider increasing the weight $w_{ij}^{(k,\ell)}$ of an existing edge by $\varepsilon>0$
or introducing a new edge from node $i$ in layer $k$ to node $j$ in layer $\ell$ with 
weight $\varepsilon>0$. This corresponds to perturbing the supra-adjacency matrix $B$ by
the matrix $\varepsilon E$, where the matrix $E\in\R^{NL\times NL}$ has entries zero 
everywhere, except for the entry one in position $(i,j)$ in the block $(k,\ell)$. It 
follows from \eqref{pert} that the impact on the Perron root of this perturbation is 
\[
\delta\rho=\varepsilon\kappa(\rho)\,{y_{N(k-1)+i}\,x_{N(\ell-1)+j}}+O(\varepsilon^2).
\]

The notion of \emph{multilayer network Perron root sensitivity} with respect to the 
direction $(i,k)\longrightarrow (j,\ell)$, defined by 
\begin{equation}\label{Smulti}
S^{\rm PR}_{i,\,j,\,k,\,\ell}(B):=\kappa(\rho)\,y_{N(k-1)+i }\,x_{N(\ell-1)+j },
\end{equation}
is helpful for determining which edge(s) to insert in, or remove from, a multilayer 
network.

\begin{rmk}
Notice that the largest entries of $\x$ and $\y$ are strictly smaller than $1$, hence the 
multilayer network Perron root sensitivity \eqref{Smulti} with respect of any direction is
less than $\kappa(\rho)$. Indeed, $\x$ and $\y$ are unit vectors with positive entries so 
that, if, e.g., $x_{N(\ell-1)+j}=1$,  this would imply that $x_k=0$ for all 
$k\ne N(\ell-1)+j$, which is not possible. 
\end{rmk}

We also introduce the \emph{multilayer network Perron root sensitivity matrix} associated
with $B$, denoted by $S^{\rm PR}(B)$, whose entries are given by the quantities 
$S^{\rm PR}_{i,\,j,\,k,\,\ell}(B)$. The following result holds true.

\begin{pro}\label{pro3}
The  multilayer Perron root  sensitivity matrix is given by
\begin{equation}\label{senmat}
S^{\rm PR}(B)= \kappa(\rho)W\in\R^{NL\times NL},
\end{equation}
where $W$ is the Wilkinson perturbation defined by \eqref{wilk}.
\end{pro}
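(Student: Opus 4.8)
The plan is to show that the $(r,c)$ entry of the sensitivity matrix $S^{\rm PR}(B)$, when the global row index $r$ and global column index $c$ are written in block form, matches the corresponding entry of $\kappa(\rho)\y\x^T$. First I would fix the indexing convention: a global index $r\in\{1,2,\dots,NL\}$ corresponds to node $i$ in layer $k$ precisely when $r=N(k-1)+i$ with $i\in\{1,\dots,N\}$ and $k\in\{1,\dots,L\}$, and this correspondence is a bijection. Under this convention, the definition \eqref{Smulti} says that the entry of $S^{\rm PR}(B)$ indexed by the direction $(i,k)\longrightarrow(j,\ell)$ sits in row $r=N(k-1)+i$ and column $c=N(\ell-1)+j$, and its value is $\kappa(\rho)\,y_{N(k-1)+i}\,x_{N(\ell-1)+j}=\kappa(\rho)\,y_r\,x_c$.

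Next I would simply observe that $\kappa(\rho)\,y_r\,x_c$ is exactly $\kappa(\rho)$ times the $(r,c)$ entry of the rank-one matrix $\y\x^T$, since $(\y\x^T)_{rc}=y_r x_c$ by definition of the outer product. Because every pair $(r,c)\in\{1,\dots,NL\}^2$ arises from exactly one direction $(i,k)\longrightarrow(j,\ell)$ via the bijection above, the entries $S^{\rm PR}_{i,j,k,\ell}(B)$ fill out the whole matrix, and the entrywise identity gives $S^{\rm PR}(B)=\kappa(\rho)\,\y\x^T=\kappa(\rho)W$ by \eqref{wilk}. It is worth noting in passing that this also covers the ``diagonal'' directions with $i=j$, $k=\ell$ — although those correspond to self-loops or intra-layer diagonal entries rather than genuine edges, the formula \eqref{Smulti} still defines a number there, and it agrees with $\kappa(\rho)(\y\x^T)_{rr}$, so the matrix identity holds without exception.

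There is essentially no obstacle here; the content of the proposition is a bookkeeping statement that the sensitivities \eqref{Smulti}, collected into a matrix according to the stated row/column placement, reassemble into the scaled Wilkinson perturbation. The only point requiring a small amount of care is making the index identification $r\leftrightarrow(i,k)$ explicit and checking it is a bijection, so that ``the matrix whose entries are the $S^{\rm PR}_{i,j,k,\ell}(B)$'' is unambiguously defined and has no missing or repeated entries. Once that is in place, the proof is a single line: $\bigl(S^{\rm PR}(B)\bigr)_{rc}=\kappa(\rho)y_r x_c=\kappa(\rho)\bigl(\y\x^T\bigr)_{rc}$ for all $r,c$.

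\begin{proof}
Index the rows and columns of matrices in $\R^{NL\times NL}$ by global indices in $\{1,2,\dots,NL\}$, and identify the global index $r$ with the pair $(i,k)$, meaning node $i$ in layer $k$, via $r=N(k-1)+i$ with $i\in\{1,\dots,N\}$ and $k\in\{1,\dots,L\}$; this correspondence is a bijection between $\{1,\dots,NL\}$ and $\{1,\dots,N\}\times\{1,\dots,L\}$. By construction, the entry of $S^{\rm PR}(B)$ associated with the direction $(i,k)\longrightarrow(j,\ell)$ is placed in row $r=N(k-1)+i$ and column $c=N(\ell-1)+j$, and by \eqref{Smulti} its value is
\[
\bigl(S^{\rm PR}(B)\bigr)_{rc}=S^{\rm PR}_{i,\,j,\,k,\,\ell}(B)=\kappa(\rho)\,y_{N(k-1)+i}\,x_{N(\ell-1)+j}=\kappa(\rho)\,y_r\,x_c.
\]
On the other hand, the $(r,c)$ entry of the rank-one matrix $\y\x^T$ is $\bigl(\y\x^T\bigr)_{rc}=y_r x_c$. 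Since the pair $(r,c)$ ranges over all of $\{1,\dots,NL\}^2$ as $(i,k)\longrightarrow(j,\ell)$ ranges over all directions, we conclude that $\bigl(S^{\rm PR}(B)\bigr)_{rc}=\kappa(\rho)\bigl(\y\x^T\bigr)_{rc}$ for every $r,c$, that is, $S^{\rm PR}(B)=\kappa(\rho)\,\y\x^T=\kappa(\rho)W$ by \eqref{wilk}.
\end{proof}
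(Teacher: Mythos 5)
Your proof is correct and follows essentially the same route as the paper: the paper's own argument is the one-line observation that $S^{\rm PR}_{i,\,j,\,k,\,\ell}(B)=\kappa(\rho)\,W_{N(k-1)+i,\,N(\ell-1)+j}$ with $W=\y\x^T$, which is precisely your entrywise identification. Your additional remarks on the bijection between global indices and node--layer pairs, and on the diagonal directions, are harmless elaborations of the same bookkeeping.
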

\begin{proof}
The proof follows from \eqref{Smulti} by observing that
\[
S^{\rm PR}_{i,\,j,\,k,\,\ell}(B)=\kappa(\rho) W_{N(k-1)+i ,\,N(\ell-1)+j},
\]
with $W=\y\x^T$.
\end{proof}

\begin{rmk}\label{nS}
Notice that both the spectral norm and the Frobenius norm of the multilayer network Perron
root sensitivity matrix are equal to the condition number of the Perron root. Moreover, 
the Perron communicability \eqref{CPN} reads 
\[
C^{\rm PN}(B)=\frac{\exp_0(\rho)}{\kappa(\rho)} \one_{NL}^T S^{\rm PR}(B)\one_{NL}.
\]
\end{rmk}

\begin{rmk}\label{robust}
Following \cite[Eqs (2.1)-(2.2)]{NR}, the spectral impact of each existing edge in $B$
can be analyzed by means of the matrix
$$-\frac{1}{\rho}B\circ S^{\rm PR}(B)\in\R^{NL\times NL},$$
where $\circ$ denotes the Hadamard product.
\end{rmk}

The exponential of the spectral radius of the graph associated with $B$ often is a fairly 
accurate relative measure of the Perron network communicability of the graph; cf. 
\eqref{CPNbd}. If we would like to modify the graph by adding an edge that increases the 
Perron network communicability as much as possible, then we should choose the indices $i$,
$j$, $k$, and $\ell$ for the new edge so that
\[
x_{N(\ell-1)+j}=\max_{1\leq q\leq NL}x_q, \qquad 
y_{N(k-1)+i}=\max_{1\leq q\leq NL}y_q.
\]

We turn to the removal of an edge, with the aim of simplifying the graph without affecting 
the Perron network communicability much. We therefore would like to choose the indices 
$1\leq\hat{\imath},\hat{\jmath}\leq N$ and $1\leq\hat{k},\hat{\ell}\leq L$ such that 
$w_{\hat{\imath},\hat{\jmath}}^{(\hat{k},\hat{\ell})}$ is positive and 
\[
y_{N(\hat{k}-1)+\hat{\imath}}\,x_{N(\hat{\ell}-1)+\hat{\jmath}}=
\min_{\substack{1\leq i,j\leq N\\[1mm]1\leq k,\ell\leq L\\[0.5mm]w_{ij}^{(k,\ell)}>0}}
y_{N(k-1)+i}\,x_{N(\ell-1)+j}.
\]

A way to determine such an index quadruple $\{\hat{\imath},\hat{\jmath},\hat{k},\hat{\ell}\}$
is to first order the products $y_i x_j$, $1\leq i,j\leq NL$ in increasing order. This
yields a sequence of index pairs $\{i_q,j_q\}_{q=1}^{N^2L^2}$ such that 
\[
y_{i_q}x_{j_q}\leq y_{i_{q+1}}x_{j_{q+1}}\qquad\forall~1\leq q<N^2L^2.
\]
Then determine the first index pair $\{i_{\hat{q}},j_{\hat{q}}\}$ in this sequence such 
that $w_{\hat{\imath},\hat{\jmath}}^{(\hat{k},\hat{\ell})}>0$, where
\[
i_{\hat{q}}=N(\hat{k}-1)+\hat{\imath},\qquad j_{\hat{q}}=N(\hat{\ell}-1)+\hat{\jmath}
\]
with $1\leq\hat{\imath},\hat{\jmath}\leq N$ and $1\leq\hat{k},\hat{\ell}\leq L$.

We remark that the perturbation bound \eqref{pert} only is valid for $\varepsilon$ of 
small enough magnitude. Nevertheless, it is useful for choosing which edge(s) to remove to 
simplify a multilayer network. This is illustrated in Section \ref{sec6}. It may be 
desirable that the graph obtained after removing an edge is connected. The connectedness 
has to be verified separately. 

\begin{rmk}\label{sym}
Notice that when the network is undirected, it may be meaningful to require the 
perturbation of the network also be symmetric. Thus, instead of considering the network 
sensitivity \eqref{Smulti} with regard to the direction 
$(i,k)\longrightarrow (j,\ell)$, we investigate the sensitivity of the network with 
regard to perturbations in the directions $(i,k)\longrightarrow (j,\ell)$ and 
$(j,\ell)\longrightarrow (i,k)$. This results in the expression
\begin{eqnarray*}
S^{\rm PR\,sym}_{i,\,j,\,k,\,\ell}(B)&:=&
\kappa(\rho)\,(y_{N(k-1)+i}\,x_{N(\ell-1)+j}+y_{N(\ell-1)+j}\,x_{N(k-1)+i})\\
&=& 2x_{N(k-1)+i}\,x_{N(\ell-1)+j},
\end{eqnarray*}
where we have used that $\x=\y$. This expression is analogous to \eqref{Smulti}.
\end{rmk}

We conclude this section with a discussion on multiplex networks. In such a network, an 
edge from node $i$ to node $j$ in layer $\ell$, with $i,j\in\{1,2,\dots,N\}$, $i\ne j$, 
and $\ell\in\{1,2,\dots,L\}$ is associated with the entry $w_{ij}^{(\ell)}\geq 0$ of the 
adjacency matrix $A^{(\ell)}$. Increasing the weight $w_{ij}^{(\ell)}>0$ of an existing 
edge by $\varepsilon>0$, or introducing a new edge by setting a zero weight 
$w_{ij}^{(\ell)}$ to $\varepsilon>0$, means perturbing ${\cal A}$ in $\eqref{calA}$ by 
$\varepsilon{\cal P}$, where 
\begin{equation}\label{Pij}
{\cal P}=[O_N,\dots,O_N,P_{ij}^{(\ell)},O_N,\dots,O_N]\in\R^{N\times NL}\quad\mathrm{with} 
\quad P_{ij}^{(\ell)}=\mathbf{e}_i\mathbf{e}_j^T\in\R^{N\times N}.
\end{equation}
Here $O_N\in\R^{N\times N}$ denotes the zero matrix. The perturbation 
$\varepsilon{\cal P}$ corresponds to perturbing the supra-adjacency matrix $B$ by an
$NL\times NL$ block matrix with all null $N\times N$ blocks except for the $\ell$th 
diagonal block $A^{(\ell)}$ in which the $(i,j)$th entry is set equal to $\varepsilon$. 

Introduce the \emph{multiplex Perron root sensitivity} ${S}^{\rm PR}_{i,\,j,\,\ell}({\cal A})$
with respect to the direction $(i,j)$ in layer $\ell$, 
\[
{S}^{\rm PR}_{i,\,j,\,\ell}({\cal A}):=\kappa(\rho)\,{y_{N(\ell-1)+i }\,x_{N(\ell-1)+j }},
\]
which is analogous to  the quantity \eqref{Smulti} for more general multilayer networks. 
Thus, if ${\cal P}$ is defined by \eqref{Pij} and ${\cal A}$ by \eqref{calA}, one has from
\eqref{pert} that $\delta\rho\approx\varepsilon{S}^{\rm PR}_{i,\,j,\,\ell}({\cal A})$.
Analogously, consider reducing the $(i,j)$th entry $w_{ij}^{(\ell)}>0$ of the adjacency 
matrix $A^{(\ell)}$ by $\varepsilon$ and assume that $0<\varepsilon\ll 1$ and
$\varepsilon<w_{ij}^{(\ell)}$. Then the modified network associated with the tensor 
${\cal A}-\varepsilon{\cal P}$ is nonnegative and connected if the network associated with
${\cal A}$ has these properties. Then
$\delta\rho\approx\ -\varepsilon {S}^{\rm PR}_{i,\,j,\,\ell}({\cal A})$.

Moreover, as shown in Remark \ref{sym}, when considering an undirected multiplex network, 
we obtain the expression
\[
{S}^{\rm PR\,sym}_{i,\,j,\,\ell}({\cal A}):=2\,{x_{N(\ell-1)+i }\,x_{N(\ell-1)+j }}.
\]

Recall that the Perron root sensitivity matrix \eqref{senmat} for general multilayer 
networks depends on the Wilkinson perturbation $W\in{\mathbb R}^{NL\times NL}$ of the 
supra-adjacency $B$ as well as on the condition number $\kappa(\rho)$. By assuming that 
 $B$ is of the type in \eqref{suprad}, the results in the following section will lead 
to analogous properties of the {\it multiplex Perron root sensitivity matrix} 
${S}^{\rm PR}({\cal A})$, whose nonvanishing entries are given by the quantities 
${S}^{\rm PR}_{i,\,j,\,\ell}({\cal A})$.

\subsection{Exploiting the structure of multiplex networks}\label{sec4}
Consider the cone ${\cal D}$ of all nonnegative block-diagonal matrices in 
${\mathbb R}^{NL\times NL}$ with $L$ blocks in ${\mathbb R}^{N\times N}$ and let 
$M|_{\mathcal D}$ denote the matrix in ${\mathcal D}$ that is closest to a given matrix  
$M\in{\mathbb R}^{NL\times NL}$ with respect to the Frobenius norm. It is straightforward 
to verify that $M|_{\mathcal D}$ is obtained by replacing all the entries outside the 
block-diagonal structure by zero.

Let $E\in{\mathcal D}$ be such that $\|E\|_F=1$, and let 
$\varepsilon>0$ be a small constant. Then
\begin{equation}\label{condsrho}
\frac{\y^T E \x} {\y^T\x}=
\frac{|\y^T E \x|} {\y^T\x}\leq
\frac{\|\y\|_2\|\y\x^T|_{\mathcal D}\|_F\|\x\|_2} {\y^T\x}=
\frac{\|\y\x^T|_{\mathcal D}\|_F}{\y^T\x},
\end{equation}
with equality for the ${\mathcal D}$\emph{-structured analogue of the Wilkinson 
perturbation}
\begin{equation}\label{WD}
E=\frac{\y\x^T|_{\mathcal D}}{\|\y\x^T|_{\mathcal D}\|_F};
\end{equation}
see \cite{NP06}. The quantity 
\[
\frac{\|\y\x^T|_{\mathcal D}\|_F}{\y^T\x} = \kappa(\rho)\|\y\x^T|_{\mathcal D}\|_F 
\]
will be referred to as the ${\mathcal D}$\emph{-structured condition number} of $\rho$ and
denoted by $\kappa_{{\mathcal D}}(\rho)$. For $E$ in \eqref{WD}, the perturbation 
\eqref{pert} of the Perron root is 
$\delta\rho=\varepsilon\kappa_{{\mathcal D}}(\rho)+O(\varepsilon^2)$.

Thus, the ${\mathcal D}$-structured analogue of the Wilkinson perturbation is the maximal 
perturbation  for the Perron root $\rho$ of a supra-adjacency matrix of the type 
\eqref{suprad} induced by a ${\mathcal D}$-structured perturbation. The following result holds.

\begin{pro}\label{pro4}
The multiplex Perron root sensitivity matrix is given by
\[
{S}^{\rm PR}({\cal A})= \kappa(\rho)W|_{\mathcal D}\in\R^{NL\times NL}\,,
\]
where $W$ is the Wilkinson perturbation defined by \eqref{wilk} and ${\cal D}$ is the cone
of all nonnegative block-diagonal matrices in ${\mathbb R}^{NL\times NL}$ with $L$ blocks 
in ${\mathbb R}^{N\times N}$.
\end{pro}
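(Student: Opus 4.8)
The plan is to reduce the claim to a direct comparison between the definition of the multiplex Perron root sensitivity matrix, given entrywise by $S^{\rm PR}_{i,j,\ell}({\cal A})=\kappa(\rho)\,y_{N(\ell-1)+i}\,x_{N(\ell-1)+j}$, and the entries of the block-diagonal restriction $W|_{\mathcal D}$ of the Wilkinson perturbation $W=\y\x^T$. First I would recall from the preceding discussion that, for a multiplex network represented by ${\cal A}$ (equivalently by $B$ of the form \eqref{suprad}), the only admissible perturbations correspond to changing an entry $w_{ij}^{(\ell)}$ of a single adjacency matrix $A^{(\ell)}$; by \eqref{Pij} such a perturbation of ${\cal A}$ corresponds to perturbing $B$ by a matrix whose sole nonzero entry lies in the $(i,j)$ position of the $\ell$th diagonal block, i.e.\ a perturbation lying in the cone ${\cal D}$. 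Hence the sensitivity matrix ${S}^{\rm PR}({\cal A})$ has, by construction, nonzero entries only in the block-diagonal positions, and in those positions its value is $\kappa(\rho)\,y_{N(\ell-1)+i}\,x_{N(\ell-1)+j}$.

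Next I would compare this with $W|_{\mathcal D}$. By the description given just before the proposition, $W|_{\mathcal D}$ is obtained from $W=\y\x^T$ by zeroing out every entry that lies outside the block-diagonal structure of ${\cal D}$. Thus the $(N(\ell-1)+i,\,N(\ell-1)+j)$ entry of $W|_{\mathcal D}$ equals the corresponding entry of $W$, namely $(\y\x^T)_{N(\ell-1)+i,\,N(\ell-1)+j}=y_{N(\ell-1)+i}\,x_{N(\ell-1)+j}$, while all off-block-diagonal entries vanish. Multiplying by the scalar $\kappa(\rho)$, we see that $\kappa(\rho)W|_{\mathcal D}$ has exactly the same entries as ${S}^{\rm PR}({\cal A})$ in the block-diagonal positions and zeros elsewhere. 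This establishes the identity ${S}^{\rm PR}({\cal A})=\kappa(\rho)W|_{\mathcal D}$ entrywise, completing the proof.

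This is really a bookkeeping argument, wholly parallel to the proof of Proposition \ref{pro3}, and I do not anticipate a genuine obstacle. The one point that deserves a careful sentence is the justification that admissible multiplex perturbations are precisely the ${\cal D}$-structured ones: one must invoke \eqref{Pij} to see that perturbing the tensor ${\cal A}$ in a single layer-$\ell$ slot maps, under the supra-adjacency representation \eqref{suprad}, to a perturbation supported in the $\ell$th diagonal block of $B$, and that the inter-layer coupling term $\gamma(\one_L\one_L^T\otimes I_N-I_{NL})$ is held fixed, so that no off-block-diagonal entries of $B$ are ever perturbed. Once this structural observation is in place, the rest is the immediate index translation $(i,j,\ell)\mapsto(N(\ell-1)+i,\,N(\ell-1)+j)$ together with the elementary description of the projection $M\mapsto M|_{\mathcal D}$ as "retain block-diagonal entries, discard the rest."
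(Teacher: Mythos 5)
Your proposal is correct and follows essentially the same route as the paper's proof: the key observation in both is that admissible multiplex perturbations only affect intra-layer (block-diagonal) entries of $B$, so the sensitivity matrix is the projection of $\kappa(\rho)W$ into ${\mathcal D}$. Your entrywise index translation simply spells out in more detail what the paper obtains by citing Proposition \ref{pro3} and projecting.
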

\begin{proof}
In the multiplex associated with the matrix ${\cal A}$ in \eqref{calA} and represented by 
the matrix $B$ in the form \eqref{suprad}, the parameter $\gamma$ that yields the weight 
of the inter-layer edges, i.e., the influence of the layers on each other, is  determined 
a priori by the model. Thus, ${S}^{\rm PR}({\cal A})\in {\mathcal D}$, because admissible 
perturbations only affect intra-edges. Hence, one obtains from Proposition \ref{pro3} that
the multiplex Perron root sensitivity matrix consists of just the projection into 
${\mathcal D}$ of \eqref{senmat} obtained by replacing all the entries of $W$ outside the
block-diagonal structure by zero. This concludes the proof.
\end{proof}

Analogously to \eqref{senmat}, the multiplex Perron root sensitivity matrix is the product
of the maximal admissible perturbation and the relevant condition number of the Perron 
root. Thus, ${S}^{\rm PR}({\cal A})$ is given by the product of the 
${\mathcal D}$-structured condition number of $\rho$, $\kappa_{{\mathcal D}}(\rho)$, 
and the ${\mathcal D}$-structured analogue of the Wilkinson perturbation $W$:
\[
{S}^{\rm PR}({\cal A})= \kappa(\rho) \|W|_{\mathcal D}\|_F\frac{W|_{\mathcal D}}
{\|W|_{\mathcal D}\|_F}.
\]
Hence, the Frobenius norm of the multiplex  Perron root sensitivity matrix is equal to the
structured condition number $\kappa_{{\mathcal D}}(\rho)$ of the Perron root; see Remark 
\ref{nS} for the general case of a multilayer network.

The above argument quantitatively shows that the Perron communicability in multiplexes is 
less sensitive, both component-wise and norm-wise, than the Perron communicability in more
general multilayer networks.

\begin{rmk}\label{shbnd}
Following the argument in Remark \ref{nS}, we define the effective Perron communicability 
in a multiplex network,
\[
C^{\rm PN}({\cal A})=\frac{\exp_0(\rho)}{\kappa(\rho)} \one_{NL}^T S^{\rm PR}({\cal A})\one_{NL}.
\]
Moreover, observing that 
\[
\one_{NL}^T S^{\rm PR}({\cal A})\one_{NL}\leq NL\|S^{\rm PR}({\cal A})\|_F = 
NL \,\kappa_{\mathcal D}(\rho),
\]
we obtain the upper bound
\[
C^{\rm PN}({\cal A})\leq NL\exp_0(\rho)\|W|_{\mathcal D}\|_F,
\]
which is sharper than \eqref{CPNbd}.
\end{rmk}

We conclude this subsection by defining the multiplex Perron root sensitivity matrix 
\begin{equation*}
{S}^{\rm PR}({\cal A})= \kappa(\rho){\cal W}\,,
\end{equation*}
where
\begin{equation}\label{calW}
{\cal W}:=[W^{(1)},W^{(2)},\dots,W^{(L)}]\in\R^{N\times NL}.
\end{equation}
Here $W^{(\ell)}\in\R^{N\times N}$ is constructed by multiplying the $\ell$th column of
$Y$ by the $\ell$th row of $X^T$, for $\ell=1,2,\ldots,L$, where the matrices 
$X,Y\in\R^{N\times L}$ are determined by reshaping the right and left Perron vectors $\x$
and $\y$ of $B$; see Section \ref{sec1} for the definition of $X$ and $Y$.

\begin{rmk}
Analogously to Remark \ref{robust}, we note that the spectral impact of each existing 
edge in $\cal A$ can be studied by means of 
$$-\frac{1}{\rho}{\cal A}\circ {S}^{\rm PR}({\cal A});$$
cf.  \cite[Eqs (2.1)-(2.2)]{NR}. 
\end{rmk}

\subsection{Exploiting the sparsity structure of multiplex networks}
When considering perturbations of {\it existing} edges, we take into account the 
projection of the Wilkinson perturbation $W$ into the cone ${\cal S}$ of all matrices in
${\mathcal D}$ with the same sparsity structure as 
${\rm diag}[A^{(1)},A^{(2)}.\dots,A^{(L)}]$. The argument that lead to the 
{\it structured} results \eqref{condsrho} and \eqref{WD} holds true for any (further) 
sparsity structure of the matrix ${\rm diag}[A^{(1)},A^{(2)},\dots,A^{(L)}]$. Moreover, 
$\kappa_{\cal S}(\rho)\leq \kappa_{\cal D}(\rho)\leq\kappa(\rho)$. One has the following result 
for the {\it multiplex Perron root structured sensitivity matrix} 
${S}^{\rm PR\, struct}({\cal A})$,  whose nonvanishing entries are given by the quantities
${S}^{\rm PR}_{i,\,j,\,\ell}({\cal A})$ that correspond to the positive entries of $B$.

\begin{pro}
The multiplex Perron root structured sensitivity matrix is given by
\[
{S}^{\rm PR\, struct}({\cal A})= \kappa(\rho)W|_{\mathcal S}\in\R^{NL\times NL}\,\,,
\]
where $W$ is the Wilkinson perturbation defined by \eqref{wilk} and ${\cal S}$ is the cone
of all nonnegative block-diagonal matrices in ${\mathbb R}^{NL\times NL}$ with $L$ blocks 
in ${\mathbb R}^{N\times N}$ having the same sparsity structure as the diagonal block 
matrices of the supra-adjacency matrix $B$ in \eqref{suprad} that represents the multiplex.
\end{pro}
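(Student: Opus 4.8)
The plan is to refine the reasoning behind Propositions~\ref{pro3} and \ref{pro4}, now restricting attention to the cone $\mathcal{S}$ of admissible perturbations of \emph{existing} edges. First I would observe that changing the weight $w_{ij}^{(\ell)}>0$ of an existing intra-layer edge by $\varepsilon$ amounts to perturbing $B$ in \eqref{suprad} by a matrix whose only nonzero entry sits in position $(i,j)$ of the $\ell$th diagonal block, hence in the sparsity pattern of ${\rm diag}[A^{(1)},\dots,A^{(L)}]$; so every admissible perturbation direction belongs to $\mathcal{S}$, and, conversely, every unit-Frobenius-norm element of $\mathcal{S}$ is a nonnegative combination of such elementary directions. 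Consequently the structured first-order sensitivity of $\rho$ is governed by the bound \eqref{condsrho}--\eqref{WD} with $\mathcal{D}$ replaced by $\mathcal{S}$, as already remarked in the text: the argument of \cite{NP06} applies verbatim to the finer sparsity pattern, the extremal perturbation is $E=\y\x^T|_{\mathcal{S}}/\|\y\x^T|_{\mathcal{S}}\|_F$, and $\kappa_{\mathcal{S}}(\rho)=\kappa(\rho)\|W|_{\mathcal{S}}\|_F$ with $\kappa_{\mathcal{S}}(\rho)\le\kappa_{\mathcal{D}}(\rho)\le\kappa(\rho)$.

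Next I would match entries. By definition, the nonvanishing entries of $S^{\rm PR\, struct}({\cal A})$ are the quantities $S^{\rm PR}_{i,\,j,\,\ell}({\cal A})=\kappa(\rho)\,y_{N(\ell-1)+i}\,x_{N(\ell-1)+j}$, taken only at the positions $(i,j)$ of the $\ell$th diagonal block with $w_{ij}^{(\ell)}>0$, and all remaining entries vanish. On the other hand, by Proposition~\ref{pro4} the (unstructured) multiplex sensitivity matrix equals $\kappa(\rho)W|_{\mathcal{D}}$, and, as noted at the beginning of Section~\ref{sec4}, projecting onto $\mathcal{S}$ simply sets to zero every entry of $W|_{\mathcal{D}}$ lying outside the prescribed sparsity pattern. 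Since $\mathcal{S}\subseteq\mathcal{D}$ we have $W|_{\mathcal{S}}=(W|_{\mathcal{D}})|_{\mathcal{S}}={\rm diag}[W^{(1)},\dots,W^{(L)}]$ with each $W^{(\ell)}$ further restricted to the sparsity pattern of $A^{(\ell)}$, so the $(N(\ell-1)+i,\,N(\ell-1)+j)$ entry of $W|_{\mathcal{S}}$ is exactly $y_{N(\ell-1)+i}\,x_{N(\ell-1)+j}$ when $w_{ij}^{(\ell)}>0$ and zero otherwise. Comparing the two descriptions entry by entry yields $S^{\rm PR\, struct}({\cal A})=\kappa(\rho)W|_{\mathcal{S}}$, as claimed.

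I do not anticipate a genuine obstacle: the statement is a bookkeeping strengthening of Proposition~\ref{pro4}. The only point needing a line of care is that the extremal structured perturbation is attainable \emph{within} $\mathcal{S}$, i.e., by a nonnegative matrix of unit Frobenius norm; this holds because $\x$ and $\y$ have strictly positive entries, so $\y\x^T|_{\mathcal{S}}$ is nonnegative and, being supported on the sparsity pattern, lies in $\mathcal{S}$ after normalization (here one uses that $B$ is irreducible and the multiplex nontrivial, so that $\y\x^T|_{\mathcal{S}}$ is not the zero matrix). If desired, one may close with the analogue of Remark~\ref{shbnd}: from $\one_{NL}^T S^{\rm PR\, struct}({\cal A})\one_{NL}\le NL\,\kappa_{\mathcal{S}}(\rho)$ one obtains the still sharper bound $C^{\rm PN}({\cal A})\le NL\exp_0(\rho)\|W|_{\mathcal{S}}\|_F$, though this is not needed for the proposition itself.
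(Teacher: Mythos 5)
Your proposal is correct and follows essentially the same route as the paper: both arguments reduce the proposition to the observation that only perturbations of existing intra-layer edges are admissible, so the sensitivity matrix is the projection of the Wilkinson perturbation $W$ into the cone ${\mathcal S}$, scaled by $\kappa(\rho)$, with the entry-by-entry identification $S^{\rm PR}_{i,j,\ell}({\cal A})=\kappa(\rho)\,y_{N(\ell-1)+i}\,x_{N(\ell-1)+j}$ at positions where $w_{ij}^{(\ell)}>0$ and zero elsewhere. The extra material on attainability of the extremal ${\mathcal S}$-structured perturbation and the sharper communicability bound is consistent with the surrounding remarks in the paper but is not needed for the proposition itself.
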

\begin{proof}
As for Proposition \ref{pro4}, the proof follows by observing that the multiplex Perron root 
structured sensitivity matrix $ {S}^{\rm PR\, struct}({\cal A})$ consists of the 
projection into ${\mathcal S}$ of $W$, because only perturbations of existing intra-edges 
are admissible. 
\end{proof}

We have the following component-wise and norm-wise inequalities:
\[
{S}^{\rm PR\, struct}({\cal A})\leq{S}^{\rm PR}({\cal A}),
\]
\[
\|{S}^{\rm PR\, struct}({\cal A})\|_F\leq\|{S}^{\rm PR}({\cal A})\|_F.
\]
\begin{rmk}

Following the argument in Remark \ref{shbnd}, we are in a position to introduce the 
notion of {\rm structured} Perron communicability in a multiplex network,
\[
C^{\rm PN\, struct}({\cal A})=\frac{\exp_0(\rho)}{\kappa(\rho)} \one_{NL}^T 
S^{\rm PN\, struct}({\cal A})\one_{NL},
\]
and obtain by using
\[
\one_{NL}^T S^{\rm PR\, struct}({\cal A})\one_{NL}\leq 
NL\|S^{\rm PR\, struct}({\cal A})\|_F = NL \,\kappa_{\mathcal S}(\rho)
\]
the sharper upper bound
\[
C^{\rm PN\, struct}({\cal A})\leq NL\exp_0(\rho)\|W|_{\mathcal S}\|_F\leq
NL\exp_0(\rho)\|W|_{\mathcal D}\|_F. 
\]
\end{rmk}

Finally, one may alternatively represent ${S}^{\rm PR\, struct}({\cal A})$ as
\begin{equation*}
{S}^{\rm PR\, struct}({\cal A})= \kappa(\rho){\cal W}||_{\mathcal S}\,,
\end{equation*}
where ${\cal W}||_{\mathcal S}$ is obtained from ${\cal W}$ in \eqref{calW}, by projecting
each matrix $W^{(\ell)}$ into the cone ${\cal S}^{(\ell)}$ of all nonnegative matrices in 
$\R^{N\times N}$ having the same sparsity structure as the matrix $A^{(\ell)}$, for 
$\ell=1,2,\ldots,L$.

\subsection{Symmetry patterns of multiplexes}
Let the network be represented by a symmetric supra-adjacency matrix $B$ of the type  
\eqref{suprad}. Applying the arguments in the preceding subsections to the cone of all the
symmetric matrices in ${\mathcal D}$ [all the symmetric matrices in ${\mathcal S}$] leads 
to the same structured analogue of the Wilkinson perturbation as $W|_{\mathcal D}$ 
[as $W|_{\mathcal S}$]. Indeed, as the network is undirected, the right and left Perron 
vectors coincide, so that the Wilkinson perturbation  $W=\y\x^T=\y\y^T$ is a symmetric 
matrix.

\section{Computed examples}\label{sec6}
This section presents some examples to illustrate the performance of the methods discussed
above. 
The computations were carried out 
using Matlab R2015b. 
The calculation of the Perron root and the left and right Perron vectors can easily be 
evaluated by using the Matlab functions {\sf eig} or {\sf eigs} for small to medium-sized 
network. For large-scale networks these quantities can be computed by the two-sided 
Arnoldi algorithm, which was introduced by Ruhe \cite{RUHE}, and has been improved by 
Zwaan and Hochstenbach \cite{ZH}.

\subsection{Example 1: A small synthetic multilayer network}
We construct a small directed unweighted general multilayer network with $N=4$ nodes in 
each layer and $L=3$ layers, illustrated in Figure \ref{fig1}. The network is represented
by a supra-adjacency matrix $B\in\R^{12\time 12}$, whose $4\times 4$ diagonal blocks are 
adjacency matrices that represent the graphs of each layer. The off-diagonal blocks 
represent edges that connect nodes in different layers. This results in a nonsymmetric matrix 
$B$, whose Perron root is $\rho(B)=2.3471$; the condition number of the Perron root is 
$\kappa(\rho(B))=1.0248$. 

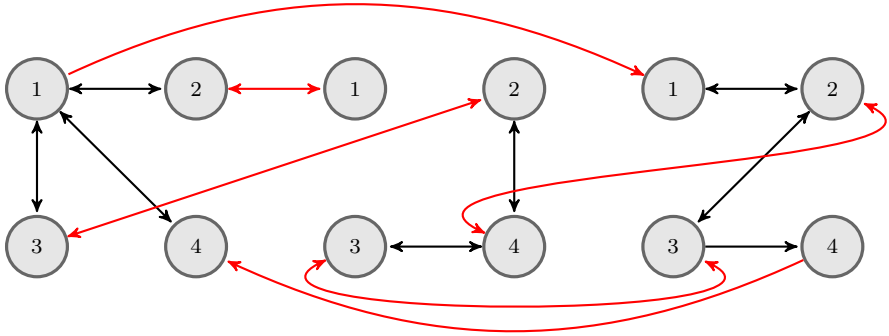
\begin{figure}[H]
\begin{center}
\begin{tikzpicture}[
 roundnode/.style={circle, draw=black!60, fill=black!10, very thick, minimum size=8mm},
 ->,>=stealth',shorten >=1pt,node distance=2.5cm,auto,thick]

\node[roundnode]      (node1)                                                   {$1$};
\node[roundnode]      (node2)       [right=1.25cm of node1]           {$2$};
\node[roundnode]      (node3)       [below=1.25cm of node2]         {$4$};
\node[roundnode]      (node4)       [below=1.25cm of node1]         {$3$};
\node[roundnode]      (node5)       [right=1.25cm of node2]          {$1$};
\node[roundnode]      (node6)       [right=1.25cm of node5]           {$2$};
\node[roundnode]      (node7)       [below=1.25cm of node6]         {$4$};
\node[roundnode]      (node8)       [below=1.25cm of node5]         {$3$};
\node[roundnode]      (node9)        [right=1.25cm of node6]           {$1$};
\node[roundnode]      (node10)       [right=1.25cm of node9]           {$2$};
\node[roundnode]      (node11)       [below=1.25cm of node10]         {$4$};
\node[roundnode]      (node12)       [below=1.25cm of node9]         {$3$};

\draw[<->] (node1) -- (node2);
\draw[<->] (node1) -- (node3);
\draw[<->] (node1) -- (node4);

\draw[<->] (node6) -- (node7);
\draw[<->] (node7) -- (node8);

\draw[<->] (node9) -- (node10);
\draw[<->] (node10) -- (node12);
\draw[->] (node12) -- (node11);
\rd{
\draw[<->] (node2) -- (node5);
}
\rd{
\draw[<->] (node4) -- (node6);
}

\rd{
\draw[<->] (node7) to [out=155,in=-25]  (node10);
}
\rd{
\draw[<->] (node8) to [out=-155,in=-25]  (node12);
}

\rd{
\draw[<-] (node9.155) to [out=155,in=25]  (node1.25);
}
\rd{
\draw[->] (node11.-155) to [out=-155,in=-25]  (node3.-25);
}
\end{tikzpicture}
\end{center}
\caption{Example 1: Layers are presented from left to right in the order $L=1$, $L=2$, and
$L=3$. The edges connecting nodes from same layer are marked in black. The edges 
connecting nodes from different layers are marked in red.}\label{fig1}
\end{figure}

Let $\varepsilon=0.3$ and let $W$ denote the Wilkinson perturbation \eqref{wilk}. Then
$\rho(B+\varepsilon W)=2.6512$. Thus, the perturbation $\varepsilon W$ of $B$ increases
the spectral radius by $0.3041$ as can be expected since $\varepsilon\kappa(\rho(B))=0.3074$. 
If we replace the matrix $W$ by the matrix of all ones, normalized to be of unit Frobenius
norm, then the spectral radius increases by only $0.2561$. Clearly, this is not an 
accurate estimate of the actual worst-case sensitivity of $\rho(B)$ to perturbations.

The largest Perron root sensitivity is $S^{\rm PR}_{2,4,3,2}(B)=0.2241$; cf. 
\eqref{Smulti}. This suggests that increasing the weight of the edge connecting node $2$
in layer $3$ and node $4$ in layer $2$ results in a relatively large change in the Perron 
root. 

In general, we expect the Perron root to increase more when introducing new eges or 
increasing edge weights that correspond to the largest entries of the Perron root 
sensitivity matrix $S^{\rm PR}(B)$ than when introducing randomly chosen edges or increasing 
randomly chosen edge weights. Table \ref{tab:table1} confirms this for Example 1.
Similarly, we expect a smaller decrease in the Perron root when decreasing the 
weights that correspond to the smallest entries of the matrix $S^{\rm PR}$ than when
decreasing random weights. Table \ref{tab:table2} confirms this for Example
1. 

The smallest entries of the matrix $S^{\rm PR}(B)$ also give the candidate edges to remove 
in order to simplify the network. However, we have to check the connectedness of the network 
after removal of an edge. Let $\hat B$ denote the supra-adjacency matrix obtained by 
removing the edge $(1,1)\longrightarrow (4,1)$, that connects node $1$ in layer $1$ and node $4$ in layer 
$1$. Then $\rho(\hat B)=2.3270$. Therefore, this removal decreases the Perron root only by an 
order of $10^{-2}$. Thus, the network represented by the supra-adjacency matrix $B$ can be 
simplified by removing the edge $(1,1)\longrightarrow (4,1)$ without a significant impact on the 
Perron network communicability. The graph obtained after removal of this edge is connected.

\begin{table}
\caption{Example 1: The four largest entries of the Perron root sensitivity matrix and 
Perron roots for the supra-adjacency matrix obtained by increasing/introducing the weights 
$w_{i,j}^{(k,\ell)}$, and Perron roots for supra-adjacency obtained by increasing/introducing the 
weight of random edges by $\varepsilon=0.3$.}
\label{tab:table1}
\begin{tabular}{ccccc}
$\{i,j,k,\ell\}$ & $S^{\rm PR}_{i,j,k,\ell}(B)$ & $\rho_{\rm new}$ & Random edges & 
$\tilde\rho_{\rm new}$ \\
   \noalign{\smallskip}\hline\noalign{\smallskip}
 $\{2,4,3,2\}$ & $0.2241$ & $2.4903$ & $\{1,3,3,3\}$ & $2.4041$ \\
 $\{4,3,2,3\}$ & $0.1725$ & $2.4592$ & $\{2,4,2,2\}$ & $2.4479$ \\
 $\{2,3,3,3\}$ & $0.1717$ & $2.4593$ & $\{1,3,1,1\}$ & $2.3975$ \\
 $\{3,4,2,2\}$ & $0.1694$ & $2.4627$ & $\{2,3,1,1\}$ & $2.3727$ \\
     \noalign{\smallskip}\hline\noalign{\smallskip} 
\end{tabular}
\end{table}

\begin{table}
\caption{Example 1: The four smallest entries of the Perron root sensitivity matrix and 
Perron roots for the supra-adjacency matrix obtained by decreasing the weights 
$w_{i,j}^{(k,\ell)}$, and Perron roots corresponding to decreasing the weight of random 
edges by $\varepsilon=0.3$.}
\label{tab:table2}
\begin{tabular}{ccccc}
$\{i,j,k,\ell\}$ & $S^{\rm PR}_{i,j,k,\ell} (B)$ & $\rho_{\rm new}$ & Random edges & 
$\tilde\rho_{\rm new}$ \\
   \noalign{\smallskip}\hline\noalign{\smallskip}
 $\{1,2,2,1\}$ & $0.0073$ & $2.3439$ & $\{3,3,2,3\}$ & $2.2822$ \\
 $\{3,4,3,3\}$ & $0.0211$ & $2.3407$ & $\{2,4,2,2\}$ & $2.2728$ \\
 $\{1,4,1,1\}$ & $0.0271$ & $2.3397$ & $\{1,2,3,3\}$ & $2.2935$ \\
 $\{1,2,1,1\}$ & $0.0331$ & $2.3332$ & $\{2,3,3,3\}$ & $2.2633$ \\
     \noalign{\smallskip}\hline\noalign{\smallskip} 
\end{tabular}
\end{table}

\subsection{Example 2: The ScotlandYard data set}
This example considers the Scotland Yard transportation network created by the authors of 
\cite{BS}. The network can be downloaded from \cite{B_repository}. It consists of $N=199$
nodes representing public transport stops in the city of London and $L=4$ layers that 
represent different modes of transportation: boat, underground, bus, and taxi. The edges 
are weighted and undirected. More precisely, the edges in the layer that represents 
travel by taxi all have weight one. A taxi ride is defined as a trip by taxi between two
adjacent nodes in the taxi layer; a taxi ride along $k$ edges is considered $k$ taxi 
rides. The weights of edges in the boat, underground, and bus layers are chosen to be 
equal to the minimal number of taxi rides required to travel between the same nodes. 

The Perron root of the supra-adjacency matrix $B$ is $\rho(B)=17.6055$, and its condition 
number is $\kappa(\rho(B))=1$. Let $\varepsilon=0.3$ and let $W$ be the Wilkinson
perturbation \eqref{wilk}. Then $\rho(B+\epsilon W)=17.9055$. Thus, the spectral radius 
increases by $0.3$. This can be expected since $\varepsilon\kappa(\rho(B))=0.3$. If we replace 
the matrix $W$ by the matrix of all ones, normalized to be of unit Frobenius norm, then 
the spectral radius increases by only $0.006$. This is not an accurate estimate of the 
actual worst-case sensitivity of $\rho(B)$ to perturbations. 

The largest entry of the Perron root sensitivity matrix is 
$S^{PR}_{89,67,2,2}(B)=0.2407$. Increasing the weight of the edge $e_{89,67,2,2}$ that 
connects the nodes $89$ and $67$ in layer $2$ typically results in a larger increase in 
the Perron root than when increasing a weight of a randomly chosen edge. For instance, 
when increasing the weight of the edge $e_{89,67,2,2}$ by $0.3$, the Perron root is
increased by $0.1458$; see Table \ref{tab:table3} for illustrations.

We also note that the Perron root $\rho(B)$ does not change significantly when setting the 
entry $(162,560)$ of $B$ to zero. This models the removal of the edge that connects node 
$162$ in layer $1$ to node $162$ in layer $3$ in the network. This edge corresponds to the
smallest entry of the Perron root sensitivity matrix 
$S^{\rm PR}_{162,162,1,3}(B)=3.2279\cdot10^{-15}$.

Now consider perturbations of existing edges. We compute the multiplex Perron root 
structured sensitivity matrix $S^{\rm PR\,struct}({\cal A})$ and compare the changes in the Perron root when increasing 
the weights of existing edges according to the largest entries of $S^{\rm PR\,struct}({\cal A})$ and
increasing the weights of randomly chosen existing edges. This is illustrated by Table 
\ref{tab:table4}. As expected, the Perron root changed the most when considering edges 
associated with a large entry in the matrix $S^{\rm PR\,struct}({\cal A})$. 

Finally, we note that the Perron root of the network is not very sensitive to removal of
edges that correspond to the smallest entries of the matrix $S^{\rm PR\,struct}({\cal A})$; see Table
\ref{tab:table5}.

\begin{table}
\caption{Example 2: The three largest entries of the Perron root sensitivity matrix and 
Perron roots for the supra-adjacency matrix obtained by increasing the weights 
$w_{i,j}^{(k,\ell)}$ by $\varepsilon=0.3$, and Perron roots corresponding to same increase 
for random edges.}
\label{tab:table3}
\begin{tabular}{ccccc}
$\{i,j,k,\ell\}$ & $S^{\rm PR}_{i,j,k,\ell} (B)$ & $\rho_{\rm new}$ & Random edges & 
$\tilde\rho_{\rm new}$ \\
   \noalign{\smallskip}\hline\noalign{\smallskip}
 $\{89,67,2,2\}$ & $0.2407$ & $17.7513$ & $\{103,40,4,4\}$ & $17.6055$ \\
 $\{89,13,2,2\}$ & $0.2041$ & $17.7299$ & $\{7,188,3,3\}$ & $17.6055$ \\
 $\{13,67,2,2\}$ & $0.1821$ & $17.7161$ & $\{174,162,3,3\}$ & $17.6055$ \\
     \noalign{\smallskip}\hline\noalign{\smallskip} 
\end{tabular}
\end{table}

\begin{table}
\caption{Example 2: Sensitivity of the Perron root to structured increase of weights by $\varepsilon=0.3$.}
\label{tab:table4}
\begin{tabular}{ccccc}
$\{i,j,\ell\}$ & $S^{\rm PR\,struct}_{i,j,\ell} ({\cal A})$ & $\rho_{\rm new}$ & Random edges & 
$\tilde\rho_{\rm new}$ \\
   \noalign{\smallskip}\hline\noalign{\smallskip}
 $\{89,67,2\}$ & $0.2407$ & $17.7513$ & $\{13,52,3\}$ & $17.6057$ \\
 $\{89,13,2\}$ & $0.2041$ & $17.7299$ & $\{74,46,2\}$ & $17.6085$ \\
 $\{67,13,2\}$ & $0.1821$ & $17.7161$ & $\{108,117,4\}$ & $17.6055$ \\
 $\{67,111,2\}$ & $0.1315$ & $17.6861$ & $\{98,97,4\}$ & $17.6055$ \\
 $\{89,140,2\}$ & $0.1309$ & $17.6858$ & $\{158,142,4\}$ & $17.6055$ \\
     \noalign{\smallskip}\hline\noalign{\smallskip} 
\end{tabular}
\end{table}

\begin{table}
\caption{Example 2: Sensitivity of the Perron root to removal of edges.}
\label{tab:table5}
\begin{tabular}{ccccc}
$\{i,j,\ell\}$ & $S^{\rm PR\,struct}_{i,j,\ell} ({\cal A})$ & $\rho_{\rm new}$ & Random edges & 
$\tilde\rho_{\rm new}$ \\
   \noalign{\smallskip}\hline\noalign{\smallskip}
 $\{175,162,4\}$ & $2.0199\cdot10^{-12}$ & $17.6055$ & $\{67,111,2\}$ & $16.6289$ \\
 $\{7,6,,4\}$ & $4.6646\cdot10^{-12}$ & $17.6055$ & $\{102,103,4\}$ & $17.6055$ \\
 $\{30,17,4\}$ & $4.7102\cdot10^{-12}$ & $17.6055$ & $\{11,100,3\}$ & $17.6054$ \\
 $\{17,7,4\}$ & $4.7552\cdot10^{-12}$ & $17.6055$ & $\{79,46,2\}$ & $17.4191$ \\
     \noalign{\smallskip}\hline\noalign{\smallskip} 
\end{tabular}
\end{table}

\subsection{Example 3: The European airlines data set}
The European airlines data set consists of $450$ nodes that represent European airports 
and has $37$ layers that represent different airlines operating in Europe. Each edge
represents a flight between airports. There are $3588$ edges. The network can be 
represented by a supra-adjacency matrix $B$ (\ref{suprad}), where the block-diagonal 
matrices contain ones if an airline offers a flight between the two corresponding 
airports, and zeros otherwise. Each off-diagonal block is the identity matrix; this 
reflects the effort required to change airlines for connecting flights. The network can be
downloaded from \cite{B_repository}.

Similarly as Taylor et al. \cite{TPM}, we only include $N=417$ nodes from the the largest 
connected component of the network. This component defines the supra-adjacency matrix $B$.
Its largest eigenvalue is $\rho(B)=38.3714$ and $\kappa(\rho(B))=1$. Let $\varepsilon=0.3$ 
and let $W$ be the Wilkinson perturbation. Then $\rho(B+\varepsilon W)=38.6714$. Thus, the
spectral radius increases by $0.3$ as expected since $\varepsilon\kappa(\rho(B))=0.3$. 

If we replace the matrix $W$ by the matrix of all ones, normalized to be of unit Frobenius
norm, then the spectral radius increases by only $0.091$. 

The smallest entry of the Perron root sensitivity matrix is 
$S^{\rm PR}_{202,202,31,28}(B)=5.1845\cdot 10^{-13}$. This suggests that the cost of changing 
from the Czech airline to the Niki airline at Valan Airport can be avoided without influencing 
the communicability of the network.

The two largest entries of the Perron root sensitivity matrix are 
$S^{PR}_{38,2,1,1}(B)=0.0040$ and $S^{PR}_{157,2,1,1}(B)=0.0034$. This indicates that the 
Perron root may be increased the most by increasing the number of flights operated by the
Lufthansa airline between the Munich and Frankfurt Am Main airports and between 
D\"usseldorf and Frankfurt Am Main airports.

Finally, we consider structured perturbations. Table \ref{tab:table6} shows important 
changes in the Perron root when increasing the weights $w^\ell_{i,j}$ corresponding to the
largest entries of the multiplex Perron root structured sensitivity matrix $S^{\rm PR\,struct}({\cal A})$ compared 
to increasing weights of random existing edges by $\varepsilon=0.3$. On the other hand, 
removing random edges decreases the Perron root more than removing edges that correspond 
to the smallest entries of $ S^{\rm PR\,struct}({\cal A})$; see Table $\ref{tab:table7}$.

We conclude that the Perron communicability of the European airlines network is not so 
sensitive to removing flights operated by Wideroe Airlines between several airports. 
Meanwhile, increasing the number of flights operated by the Lufthansa airline would 
increase the communicability of the network significantly.

\begin{table}
\caption{Example 3: Sensitivity of the Perron root to structured increase of weights $\varepsilon=0.3$.}
\label{tab:table6}
\begin{tabular}{ccccc}
$\{i,j,\ell\}$ & $S^{\rm PR\,struct}_{i,j,\ell} ({\cal A})$ & $\rho_{\rm new}$ & Random edges & 
$\tilde\rho_{\rm new}$ \\
   \noalign{\smallskip}\hline\noalign{\smallskip}
 $\{2,38,1\}$ & $0.0040$ & $38.3738$ & $\{31,157,13\}$ & $38.3719$ \\
 $\{2,157,1\}$ & $0.0034$ & $38.3734$ & $\{246,12,2\}$ & $38.3717$ \\
 $\{157,38,1\}$ & $0.0033$ & $38.3734$ & $\{32,164,2\}$ & $38.3715$ \\
 $\{50,2,1\}$ & $0.0026$ & $38.3730$ & $\{27,64,28\}$ & $38.3719$ \\
 $\{50,38,1\}$ & $0.0026$ & $38.3729$ & $\{107,78,2\}$ & $38.3718$ \\
     \noalign{\smallskip}\hline\noalign{\smallskip} 
\end{tabular}
\end{table}

\begin{table}
\caption{Example 3: Sensitivity of the Perron root to structured removal of edges.}
\label{tab:table7}
\begin{tabular}{ccccc}
$\{i,j,l\}$ & $S^{\rm PR\,struct}_{i,j,\ell} ({\cal A})$ & $\rho_{\rm new}$ & Random edges & 
$\tilde\rho_{\rm new}$ \\
   \noalign{\smallskip}\hline\noalign{\smallskip}
 $\{350,316,35\}$ & $1.5058\cdot10^{-11}$ & $38.3714$ & $\{61,2,1\}$ & $38.3689$ \\
 $\{202,144,35\}$ & $1.5300\cdot10^{-11}$ & $38.3714$ & $\{64,170,6\}$ & $38.3695$ \\
 $\{316,144,35\}$ & $1.6606\cdot10^{-11}$ & $38.3714$ & $\{237,15,4\}$ & $38.3697$ \\
  $\{202,270,35\}$ & $1.4789\cdot10^{-11}$ & $38.3714$ & $\{71,80,4\}$ & $38.3704$ \\
 $\{350,144,35\}$ & $3.6032\cdot10^{-11}$ & $38.3714$ & $\{26,15,9\}$ & $38.3691$ \\

     \noalign{\smallskip}\hline\noalign{\smallskip} 
\end{tabular}
\end{table}

\subsection{Example 4: General multilayer network}
We consider an example of a general multilayer network, where interactions are allowed 
between different nodes in different layers. The network has $160$ nodes, $6$ layers, and
$148$ edges that may be directed. The network can be downloaded from 
\texttt{https://github.com/wjj0301/Multiplex-Networks}.\

The Perron root of the supra-adjacency matrix $B$ associated with the network, and its 
condition number are $\rho(B)=8.1324$ and $\kappa(\rho(B))=1.3277$, respectively. Let 
$\varepsilon=0.3$ and let $W$ be the Wilkinson perturbation. Then the Perron root of 
$B+\varepsilon W$ is $0.3990$ larger than $\rho(B)$. This can be expected since 
$\varepsilon\kappa(\rho(B))=0.3983$. The largest entry of the Perron root sensitivity matrix 
is $S^{\rm PR}_{6,24,1,1}(B)=0.3389$. Increasing the weight of the edge connecting node 
$6$ and node $24$ in layer $1$ by $0.3$ increases the Perron root by $0.0998$.

We used $\varepsilon=0.3$ in all computed examples. The conclusions drawn would have been
the same if instead $\varepsilon=0.1$ were used. 



\section{Conclusion}\label{sec7}
This paper investigates the communicability of multilayer networks by introducing the 
concept of Perron communicability for this kind of networks. The communicability is 
measured by the Perron root of the supra-adjacency matrix associated with the network.
The Perron vectors of this matrix help to determine which edge weights to increase or 
reduce in order to increase or reduce, respectively, the Perron communicability the 
most. Our analysis also addresses the question of which edges can be removed without 
changing the Perron communicability much. 

%
%

\thebibliography{99}
\bibitem{BK}
M. Benzi and C. Klymko, Total communicability as a centrality measure, J. Complex 
Networks, 1 (2013), pp. 124--149.
\bibitem{B_repository}
K. Bergermann, Multiplex-matrix-function-centralities,\\
\texttt{https://github.com/KBergermann/Multiplex-matrix-function-centralities}.
\bibitem{BS} 
K. Bergermann and M. Stoll, Fast computation of matrix function-based centrality measures
for layer-coupled multiplex networks, Phys. Rev. E, 105 (2022), 034305.
\bibitem{BRZ1}
C. Brezinski and M. Redivo-Zaglia, The PageRank vector: Properties, computation, 
approximation, and acceleration, SIAM J. Matrix Anal. Appl., 28 (2006), pp. 551--575.
\bibitem{BRZ2}
C. Brezinski and M. Redivo-Zaglia, Rational extrapolation for the PageRank vector,
Math. Comp., 77 (2008), pp. 1585--1598.
\bibitem{CRZT}
S. Cipolla, M. Redivo-Zaglia, and F. Tudisco, Shifted and extrapolated power methods for 
tensor $\ell^p$-eigenpairs, Electron. Trans. Numer. Anal., 53 (2020), pp. 1--27.
\bibitem{DSOGA}
M. De Domenico, A. Sol\'e-Ribalta, E. Omodei, S. Gómez, and A. Arenas, Centrality in 
interconnected multilayer networks, arXiv:1311.2906v1 (2013).
\bibitem{DJNR} 
O. De la Cruz Cabrera, J. Jin, S. Noschese, and L. Reichel, Communication in complex 
networks, Appl. Numer. Math., 172 (2022), pp. 186--205.
\bibitem{DMR1}
O. De la Cruz Cabrera, M. Matar, and L. Reichel, Edge importance in a network via line 
graphs and the matrix exponential, Numer. Algorithms, 83 (2020), pp. 807--832.
\bibitem{DMR2}
O. De la Cruz Cabrera, M. Matar, and L. Reichel, Centrality measures for node-weighted 
networks via line graphs and the matrix exponential, Numer. Algorithms, 88 (2021), pp. 
583--614.
\bibitem{Es}
E. Estrada, The Structure of Complex Networks: Theory and Applications, Oxford University
Press, Oxford, 2012.
\bibitem{EH}
E. Estrada and D. J. Higham, Network properties revealed through matrix functions,
SIAM Rev., 52 (2010), pp. 696--714.
\bibitem{HJ}
R. A. Horn and C. R. Johnson, Matrix Analysis, Cambridge University Press, Cambridge, 1985
\bibitem{Kl}
J. M. Kleinberg, Authorative sources in a hyperlinked environment, J. ACM, 46 (1999), 
pp. 604--632.
\bibitem{LNY}
X. Li, M. K. Ng, and Y. Ye, HAR: Hub, authority and relevance scores in multi-relational 
data for query search, Proc. 2012 SIAM Intern. Conf. Data Mining, eds. J. Ghosh, H. Liu, 
I. Davidson, C. Domeniconi, and C. Kamath, SIAM, Philadelphia, 2012, pp. 141--152.
\bibitem{MSN}
A. Milanese, J. Sun, and T. Nishikawa, Approximating spectral impact of structural
perturbations in large networks, Phys. Rev. E, 81 (2010), Art. 046112.
\bibitem{Ne}
M. E. J. Newman, Networks: An Introduction, Oxford University Press, Oxford, 2010.
\bibitem{NP06} 
S. Noschese and  L. Pasquini, Eigenvalue condition numbers: Zero-structured versus 
traditional, J. Comput. Appl. Math., 185 (2006), pp. 174--189.
\bibitem{NR}
S. Noschese and L. Reichel, Estimating and increasing the structural robustness of a 
network, Numer. Linear Algebra Appl., 29 (2022), e2418.
\bibitem{RUHE}
A. Ruhe, The two-sided Arnoldi algorithm for nonsymmetric eigenvalue problems, Matrixa
Pencils, eds. B. K\aa gstr\"om and A. Ruhe, Springer, Berlin, (1983), pp. 104--120.
\bibitem{TPM}
D. Taylor, M. A. Porter, and P. J. Mucha, Tunable eigenvector-based centralities 
for multiplex and temporal networks, Multiscale Model. Simul., 19 (2021), pp. 113--147.
\bibitem{TAG}
F. Tudisco, F. Arrigo, and A. Gautier, Node and layer eigenvector centralities for 
multiplex networks, SIAM J. Appl. Math., 78 (2018), pp. 853--876.
\bibitem{Wi}
J. H. Wilkinson, Sensitivity of eigenvalues II, Utilitas Mathematica, 30 (1986), pp.
243--286.
\bibitem{ZH}
I. N. Zwaan and M. E. Hochstenbach, Krylov–Schur-type restarts for the two-sided Arnoldi
method, SIAM J. Matrix Anal. Appl., 38 (2017), pp. 297--321.
\end{document}